\newtheorem{theorem}{Theorem}[section]
\newtheorem{corollary}[theorem]{Corollary}
\newtheorem{proposition}[theorem]{Proposition}
\newtheorem*{coro}{Corollary} 
\theoremstyle{definition}
\newtheorem{definition}[theorem]{Definition}
\newcommand{\C}{\mathbb{C}}
\newcommand{\D}{\mathbb{D}}
\newcommand{\R}{\mathbb{R}}
\newcommand{\Z}{\mathbb{Z}}
\newcommand{\E}{\mathbb{E}}
\newcommand{\T}{\mathbb{T}}
\renewcommand{\P}{\mathbb{P}}
\newcommand{\hf}{\frac{_1}{^2}}
\newcommand{\caD}{{\mathcal D}}
\newcommand{\caE}{{\mathcal E}}
\newcommand{\caF}{{\mathcal F}}
\newcommand{\caH}{{\mathcal H}}
\newcommand{\caL}{{\mathcal L}}
\newcommand{\caM}{{\mathcal M}}
\newcommand{\caO}{{\mathcal O}}
\newcommand{\caR}{{\mathcal R}}
\newcommand{\caS}{{\mathcal S}}
\newcommand{\caT}{{\mathcal T}}
\newcommand{\caV}{{\mathcal V}}
\newcommand{\ep}{{\epsilon}}
\newcommand{\non}{{\nonumber}}
\newcommand{\Hom}{\operatorname{Hom}}
\newcommand{\Ker}{\operatorname{Ker}}
\title[Quantum Fields and Probability]{Quantum Fields and Probability}
\author[Antti Kupiainen]{Antti Kupiainen\thanks{Supported by Academy of Finland}}
\begin{document}

\begin{abstract}
I review some recent work where ideas and methods from Quantum Field Theory have proved useful in probability and vice versa. The topics discussed include the use of Renormalization Group theory in Stochastic Partial Differential Equations driven by space-time white noise and the use of the theory of Gaussian Multiplicative Chaos in the study of two dimensional Liouville Conformal Field theory.
\end{abstract}

\begin{classification}
Primary 		81T08; ; Secondary 60H15.
\end{classification}

\begin{keywords}
Quantum Fields, Stochastic PDE's, Multiplicative Chaos, Liouville Gravity
\end{keywords}

\maketitle

\section{Quantum Fields and Random Fields}\label{intro}

Quantum Field Theory (QFT) was originally developed as a quantum theory of physical systems with infinite number of degrees of freedom. 
Perhaps the simplest example 
is obtained from the wave equation 
$$(\partial_t^2-\Delta)\varphi=0$$
 where $\varphi(t,x)$, $x\in\R^{d-1}$. In its first order form $\dot\varphi=\pi,\dot\pi=\Delta\varphi$  it is an infinite dimensional Hamiltonian system on the phase space of suitable functions $\varphi(x),\pi(x)$. In quantum theory the field $\varphi(t,x)$ becomes an operator valued distribution i.e. the smeared fields $\varphi(f)
$ with $f\in \caS(\R\times\R^{d-1})$ act as (unbounded) operators in a Hilbert space $\caH$. The physical content of this QFT is summarized by the 
the {\it Wightman functions}
\begin{align*}
 (\Omega,\varphi(f_1)\dots\varphi(f_n)\Omega)=\int W_n(z_1,\dots,z_n)\prod f(z_i)dz_i
\end{align*}
where we denoted $(t,x)$ by $z$ and $\Omega\in\caH$ is a special vector ("vacuum"). The Wightman functions  are distributions, $W_n\in S'(\R^{nd})$ and more generally, an axiomatic characterization of QFT can be given  in terms of such distributions \cite{SW}.

In this formulation there is nothing random about QFT. However, it was later realized that the Wightman functions $W_n$ are boundary values of analytic functions in complex $z_i$ and they have in particular  an analytic continuation to the {\it Euclidean} region of imaginary time, leading to  {\it Schwinger functions}
$$S_n((t_1,x_1),\dots,(t_n,x_n)):=W_n((-it_1,x_1),\dots,(-it_n,x_n)).
$$
The { Schwinger functions}  have a probabilistic interpretation as correlation functions of a {\it random field} $\phi(x)$, $x\in\R^{d}$ (modulo a regularity assumption guaranteeing solution of a moment problem):
\begin{align*}
S_n(f_1,\dots,f_n)=\E(
\phi(f_1)\dots\phi(f_n)).
\end{align*}
In our simple example of quantization of the wave equation the resulting random field is the {\it Gaussian Free Field} (GFF), a Gaussian random distribution with covariance operator given by the Green function of the Laplace operator in $\R^{d}$.

Conversely Osterwalder and Schrader \cite{OS} showed that one can also do the opposite: starting from the Schwinger functions one can  reconstruct the Wightman functions provided the former satisfy a positivity condition called 
Reflection Positivity. In this sense the subject of QFT can be viewed as a special case of that of random fields. 

In fact, this reconstruction of the QFT plays very little role  in the major physical application of reflection positive random fields namely the theory of critical phenomena. Here the physics takes place in the Euclidean region and the coordinate corresponding to the (imaginary) time above  is a spatial variable. Examples are condensed matter systems which exhibit  scale invariance  at second order phase transition points. This scale invariance is {\it universal} i.e. independent on most microscopic details and it described  by Euclidean QFT's.

Of course in QFT  one is not interested in the free field but one with interactions. In our  wave equation example interactions enter as  nonlinear terms in the equation. The resulting random fields are {\it non-gaussian} and the goal of  {\it Constructive Field Theory} is to give examples of such fields satisfying the OS axioms. In the context of critical phenomena the statistics of the fields at the phase transition point are strongly nongaussian in the interesting cases and the scaling properties of the correlation functions differ radically from that of a Gaussian theory.

The construction of such non-gaussian random fields is by no means easy. For example if we add a simple nonlinear term $ \varphi^3$ to the wave equation then for all $d$ one needs to perform a {\it renormalization}: the equation has to be regularized by smoothening it on scale $\epsilon$ and renormalized by adding terms blowing up  as $\epsilon\to 0$. 
The way to understand this and to find the renormalizations was conceived first by physicists in the 40's studying the QFT in a formal perturbation expansion in the nonlinearity and  conceptualized by K. Wilson in the 60's \cite{wilson} in the theory of Renormalization Group.

Another vast field where ideas from QFT have proved to be very useful deals with {\it noisy dynamics}. Stationary states of Markov processes with infinite dimensional state space give rise to QFT-like random fields. Examples are nonlinear Stochastic PDE's driven by space-time white noise and scaling limits of interacting particle systems. These stationary states are usually not OS-positive but nevertheless share many features of  QFT's,  in particular the problem of renormalization.

In this presentation I will discuss two examples of interaction between QFT and Probability. The first deals with Nonlinear Parabolic PDE's driven by a very rough noise. These equations require renormalization in order to be well posed and QFT ideas are very useful in understanding of  this. 

The second example is from Constructive QFT: I will discuss how ideas from the probabilistic theory of multiplicative chaos can be used to construct an interesting Conformal Field Theory, the Liouville Theory which is  conjectured to be a basic building block of two dimensional quantum gravity or in other words the scaling limit of discrete two dimensional random surfaces.

\section{Rough SPDE's}
Nonlinear parabolic PDE's driven by a space time decorrelated noise are ubiquitous in physics.
These equations are of the form
\begin{align}
 \partial_t \phi=\Delta \phi+V(\phi)+\xi 
  \label{eq: upde}
\end{align}
where $\phi(t,x)$ is defined on $\Lambda\subset\R^d$, $V(\phi)$ is a function of $\phi$ and possibly  its derivatives which can also be non-local  and $\xi$ is white noise on $\R\times \Lambda$,
formally
\begin{align*}
 \E\ \xi(t',x')\xi(t,x)=\delta(t'-t)\delta(x'-x).
  \label{eq: white}
\end{align*}
Examples are the KPZ equation with $d=1$ and
\begin{align*}
V(\phi)=(\partial_x\phi)^2\ \ \ {\rm (KPZ)}
\end{align*}
describing random deposition in surface growth and the Ginzburg-Landau model
\begin{align*}
V(\phi)=-\phi^3\ \ \  \ {\rm (GL)}
\end{align*}
describing stochastic dynamics of  spin systems.  
 
Usually in these problems one is interested in the behavior of solutions in large time and/or long distances in space. In particular one is interested in stationary states
and their scaling properties. These can be studied with regularized versions of the equations where the
noise is replaced by a mollified version that is smooth in small scales. Often one expects that the large scale
behavior is insensitive to such regularization. 

From the mathematical point of view and sometimes also from the physical
one it is of interest to inquire about the short distance properties i.e. about the well-posedness of the equations without regularizations. Then one is encountering the problem that the solutions are expected to  have
very weak regularity, they are distributions, and it is not clear how to set up the solution theory for
the nonlinear equations in distribution spaces. 

Recently this problem was addressed by Martin Hairer \cite{{hairerkpz},hairer} who set up a solution theory for a class
of such equations, including the KPZ equation and the GL  equation  in three dimensions. The latter case was also addressed by Catellier and Chouk \cite{CC} based on the theory of paracontrolled distributions developed in \cite{GIP}.  I will discuss an alternative approach \cite{AK, KM} to this problem based on Wilson's RG ideas developed originally in QFT.

\subsection{Divergences}

To see what the problem with equations \eqref{eq: upde} is consider first the linear case $F=0$. We take the spatial domain to be the unit torus $\Lambda=\T^d=(\R/\Z)^d$. The solution with initial data $\phi_0$ is then given by
$$
\phi(t,x)=(e^{t\Delta}\phi_0)(x)+\eta(t,x)
$$
with
\begin{equation}
\eta(t)=\int_0^te^{(t-s)\Delta}\xi(s)ds
\label{etadefi}
\end{equation}
and we denote the heat semigroup by $e^{t\Delta}$.
$\eta(t,x)$ is a random field with covariance 
$$
\E\, \eta(t,x)\eta(t,y)=C_t(x,y)
$$ 
where $C_t(x,y)$ is the integral kernel of the operator
$$
\int_0^te^{2t\Delta}dt=-\hf({1-e^{2t\Delta}}){\Delta}^{-1}
$$
so $\eta$ becomes the GFF  as $t\to\infty$. In particular  $C_t(x,y)$ is {singular} in short scales :
\begin{equation}
\E\eta(t,x)\eta(t,y)\asymp \frac{1}{|x-y|^{d-2}}.\label{cova}
\end{equation}


Let now $V\neq 0$ and write  \eqref{eq: upde} as an integral equation (take  $ \phi_0=0$ )
\begin{align*}
\phi(t)&=
\int_0^te^{(t-s)\Delta}(V(\phi(s))+\xi(s))ds
=\eta(t)+\int_0^te^{(t-s)\Delta}V(\phi(s))ds\non
\end{align*}
where $\eta(t,x)$ is the solution to the linear equation.  
Fix a realization of the random field $\eta(t,x)$ and try to solve this fixed point problem by Picard iteration$$
\phi(t)=
\eta(t)+\int_0^te^{(t-s)\Delta}V(\eta(s))ds+\dots.
$$
This {fails}: for the KPZ equation $V(\eta(s))=(\partial_x\eta(s,x))^2$ is not defined as $\eta$ has the regularity of Brownian motion in $x$.  For the GL equation 
$V(\eta(s))=\eta(s,x)^3$ and by \eqref{cova} this is not defined as a random filed as $\E\eta(s,x)^3\eta(s',x')^3=\infty $. 

\subsection{Superrenormalizable QFT}
Such divergencies are familiar from QFT. Indeed, in the $V=0$ case the equation has a stationary measure $ \mu_{GFF}$ (obtained as $t\to\infty$ from \eqref{cova}) which is the Gaussian Free Field corresponding to the quantization of the linear wave equation discussed in Section \ref{intro}.  Formally
the GL equation then 
 has  a {stationary measure}
 $$
 \nu(d\phi)=e^{-\frac{1}{4}\int_{\T^d}\phi(x)^4dx}
\mu_{GFF}(d\phi) .
 $$
 This is precisely the Euclidean QFT measure corresponding to the quantization of the nonlinear wave equation. Since $\phi(x)^4$ is not a well defined random variable in $d>1$ to define this measure one needs 
{renormalization}.
First we {regularize} 
$$\phi_\epsilon(x):= (\rho_\ep\ast\phi)(x),\ \ \ \rho_\ep(x)=\ep^{-d}\rho(x/\ep)$$
where $\rho$ is a smooth mollifier and then {renormalize} by adding a {\it counter term}
 $$
 U^{(\ep)}(\phi_\ep):=\frac{_1}{^4}\phi_\ep^4+\hf r_\ep\phi_\ep^2
 $$
Then
$$
\lim_{\ep\to 0}e^{-\int_{\Lambda}U^{(\ep)}(\phi_\ep(x))dx}
\mu_{GFF}(d\phi)
$$
exists provided we take
\begin{align}
r_\epsilon=\left\{
\begin{array}{ll} 
 m\log\epsilon &d=2 \\
  m_1\epsilon^{-1}+m_2\log\epsilon &d=3 \end{array}  \right. \label{counter}
\end{align}
with suitable $m,m_1,m_2$. Construction of this limit was a major accomplishment of Constructive Field Theory in the  1970's (for references see \cite{Simon,GJ}).

\subsection{Counterterms for PDE}   

Let us take the same approach to the equation   \eqref{eq: upde} by considering a {regularized} version:
\begin{align}
\partial_t \phi=\Delta \phi+V_\epsilon(\phi)+\xi_\ep\label{1}
\end{align}
with mollified noise\footnote{In the RG setup a space time mollification is actually more natural, see \cite{AK}} $\xi_\ep(t)=\rho_\ep\ast\xi(t)$ and renormalized  $V_\epsilon$ which has $\epsilon$-dependent terms added to $V$. The noise  $\xi_\ep$ is a.s. smooth
so \eqref{1} is well posed with a.s. smooth solution  $\phi_\epsilon$. Our task is to determine $V_\epsilon$ so that  $\phi_\epsilon$ converges as $\epsilon\to 0$ to some distribution $\phi$. We consider the GL and the KPZ equations and the following generalization of the KPZ equation which shows up in fluctuating hydrodynamics \cite{Spohn} and turns out to be instructive:
\begin{align*}
\partial_t \phi_\alpha=\partial_x^2 \phi_\alpha+\sum_{\beta\gamma}{ M}_\alpha^{\beta\gamma} \partial_x { \phi}_\beta\partial_x  { \phi}_\gamma+c_{\alpha,\epsilon}+\xi_\ep
\end{align*}
where the field $\phi=(\phi_1,\phi_2,\phi_3)$ takes values in $\R^3$ and the coefficients $ M_\alpha^{\beta\gamma} \in\R$  may be quite general. For this equation we take a constant counter term $$c_{\alpha,\epsilon} =a_\alpha\epsilon^{-1}+{b}_\alpha\log\epsilon$$ and for the
GL equation we take the counter term $r_\epsilon\phi$ with $r_\epsilon$ given by \eqref{counter}. Then
\begin{theorem} There exist constants $a_\alpha,b_\alpha, m,m_1,m_2$ s.t. the following holds 
{almost surely in $\xi$}: 
There exists $T>0$ s.t. 
the regularized equation has a unique solution $\phi_\ep(t,x)$ for $t\leq T$ and 
$$
\phi_\ep\to\phi\in{\caD}'([0,T]\times \T^d)
$$
where $\phi$ is independent of the cutoff function $\rho$.
\end{theorem}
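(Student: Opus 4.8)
The plan is to recast the problem as a fixed point equation for the \emph{remainder} left after subtracting a finite, explicit list of stochastic terms, and to solve that fixed point by a Renormalization Group (RG) iteration over parabolic dyadic scales, following \cite{AK, KM}. The point of departure is that all three equations are \emph{superrenormalizable}: under parabolic scaling $(t,x)\mapsto(\lambda^2 t,\lambda x)$ the noise has scaling dimension $-(d+2)/2$, so $\eta$ from \eqref{etadefi} has dimension $(d-2)/2$, and only finitely many composite fields built out of $\eta$ (and, in $d=1$, out of $\partial_x\eta$, of dimension $1/2$) fail to make sense. Counting powers — equivalently, listing the divergent trees in the Picard expansion — shows that for GL in $d=2,3$ the only ill-defined objects are the Wick cube $:\!\phi_\ep^3\!:$ and its one further self-contraction, which produces a term proportional to $\phi_\ep$ whose coefficient diverges exactly as in \eqref{counter}, while for KPZ and for the fluctuating hydrodynamics system the only ill-defined object is $\partial_x\eta\,\partial_x\eta$ and its self-contraction into a constant with divergence of the form $a\ep^{-1}+b\log\ep$. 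So the counterterms displayed before the theorem are precisely those dictated by the scaling analysis, and $m,m_1,m_2,a_\alpha,b_\alpha$ are fixed by demanding that they cancel these divergent expectations.

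First I would construct the stochastic data. Decompose the heat kernel parabolically, $G=\sum_{n\ge0}G_n$ with $G_n$ living at space scale $2^{-n}$ and time scale $2^{-2n}$, put $\eta_n=G_n\ast\xi$ and, for the cutoff theory with $\ep=2^{-N}$, $\eta^{(\ep)}=\sum_{n\le N}\eta_n$. Gaussian hypercontractivity then gives, for every $p$, bounds of the form $\E\,\|\eta_n\|^p\le C_p\,2^{-n\kappa p}$ in the relevant H\"older/Besov norms on the pieces and, what really matters, on the handful of Wick-renormalized composite fields that occur ($:\!\eta^2\!:$, $:\!\eta^3\!:$ for GL; $:\!\partial_x\eta\,\partial_x\eta\!:$ and the few mixed trees needed to peel one or two more Picard iterates, for KPZ and the system). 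Borel--Cantelli upgrades these to an event of full measure on which all these objects converge as $\ep\to0$, uniformly in the mollifier $\rho$, and on which the deterministic estimates below hold with random but finite constants.

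The core of the argument is the RG iteration. Writing $\phi=\eta+w$ — peeling off one or two further Picard iterates in the KPZ and system cases so that the leading singular contributions on the right-hand side are the well-defined renormalized stochastic terms — produces an equation for $w$ whose right-hand side is, modulo the explicit counterterm, a $G$-convolution of products of the stochastic data with $w$ and its derivatives. I would then set up, at scale $2^{-n}$, an effective equation for a field $w^{(n)}$ with an effective nonlinearity $V^{(n)}$ and an effective force $f^{(n)}$ collecting the already-integrated lower modes, and define the RG map $(V^{(n)},f^{(n)})\mapsto(V^{(n+1)},f^{(n+1)})$ by integrating out mode $n$ through a single-scale fixed point. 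Two inductive facts must be carried along all scales and uniformly in $\ep$: (i) the flow stays in a bounded set — the finitely many \emph{relevant} couplings (the coefficient of $\phi$ for GL in $d=3$, the constant for KPZ and the system; logarithmically marginal ones for GL in $d=2$) are pinned at their values by the choice of counterterm, while the irrelevant couplings contract; and (ii) the single-scale fixed point is a contraction on a ball in the scale-$2^{-n}$ norm, which is where the parabolic smoothing of $G_n$ and the shortness of $[0,T]$ enter — taking $T$ small makes every single-scale map contractive with a rate uniform in $n$ and in $\ep$. Summing the resulting geometric series over scales gives $w=\lim_n w^{(n)}$ on $[0,T]$, hence a solution with $T$ bounded below independently of $\ep$; uniqueness holds because any solution of the classical regularized equation (classical since $\xi_\ep$ is smooth) must coincide with the output of the same iteration; and since $(V^{(n)},f^{(n)})$ converge as $\ep\to0$ to cutoff-independent limits, so does $\phi_\ep$.

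The main obstacle is exactly (i)--(ii) executed \emph{uniformly in the number of scales and in $\ep$}: propagating the inductive bounds on $V^{(n)}$ and $f^{(n)}$ through infinitely many applications of the RG map while controlling the feedback of $w^{(n)}$ back into the nonlinearity — the $\ep^{-1}$ divergence in GL in $d=3$ and in the hydrodynamic system is borderline, and its exact cancellation against the prescribed counterterm is the delicate point — and ensuring the whole scheme degrades gracefully, rather than blowing up, as $\ep\to0$. A secondary, more routine difficulty is the probabilistic one: obtaining the almost-sure, uniform-in-$\ep$ convergence of the few renormalized trees with constants summable over scales so that Borel--Cantelli applies; the bookkeeping of which trees are relevant — hence which divergences $a_\alpha,b_\alpha,m,m_1,m_2$ must absorb — is where the structure of $M_\alpha^{\beta\gamma}$ and of the GL nonlinearity genuinely enters.
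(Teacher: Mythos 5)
Your proposal follows essentially the same route as the paper's outline: a Wilsonian RG iteration over parabolic scales in the spirit of \cite{AK,KM}, with the counterterms identified by power counting on the finitely many divergent Wick monomials, the stochastic data controlled by hypercontractivity plus Borel--Cantelli, and the remainder propagated inductively with relevant/marginal couplings pinned by the counterterms and irrelevant ones contracting. The only differences are cosmetic (dyadic scales $2^{-n}$ versus the paper's $L^n$ after rescaling to dimensionless variables, and a separate effective force rather than folding everything into the effective nonlinearity $v_\mu^{(\ep)}$).
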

\noindent We remark that in general $b_\alpha\neq 0$ but for the KPZ case it vanishes.

\subsection{Perturbative vs. Wilsonian approach}   
The fixed point problem related to \eqref{1} is
\begin{equation}
\phi(t)=\int_0^te^{(t-s)\Delta}(V_\epsilon(\phi(s))+\xi_\epsilon(s))ds.\label{fpe}
\end{equation}
For $\ep>0$ this problem has smooth solution $\phi_\ep$ at least for some time since the noise is a.s. smooth.
However, since the limit $\phi$ will be a distribution its not clear how to set this up as a
Banach fixed point problem
\vskip 2mm
The approach in \cite{GIP,CC, hairer} is to develop a
nonlinear theory of distributions 
allowing to formulate and solve the fixed point problem. This can be compared to perturbative renormalization theory in QFT.

Our approach uses  another approach to renormalization pioneered by K. Wilson in the 60's \cite{wilson}.
In Wilson's approach adapted to the SPDE one would not try to solve  equation \eqref{eq: upde}, call
it $\caE$,  directly
but rather go scale by scale starting from the scale $\ep$ and deriving {\it effective} equations $\caE_n$ 
for
larger scales $L^n\ep:=\ep_n$, $n=1,2,\dots$ where $L>1$ is arbitrary. Going from scale $\ep_n$ to $\ep_{n+1}$ is a problem
with $\caO(1)$ cutoff when transformed to dimensionless variables. This problem can be studied by a standard Banach fixed point method. 

The possible singularities of the original problem are present in the large $n$ behavior of   the corresponding effective equation. One views $n\to\caE_n$ as a dynamical system and
attempts to find an initial condition at $n=0$ i.e. modify $\caE$ so that if we fix the scale $\ep_n=\ep'$ and then let  $\ep\to 0$ (and as a consequence $n\to\infty$) the
effective equation at scale $\ep'$ has a limit. It turns out that controlling this limit for the effective equations allows one then to control the solution to the original equation \eqref{eq: upde}.

In  this approach  no new theory of distributions needed and it provides a  general method to derive counterterms for subcritical nonlinearities as well as a  general method to study {\it universality}.

\subsection{Dimensionless variables}

For simplicity of exposition we will use a regularization in time instead of  space in the fixed point problem \eqref{fpe}:
\begin{equation}
\phi(t)=\int_0^t\chi(\frac{_{t-s}}{^{\epsilon^2}})e^{(t-s)\Delta}(V_\epsilon(\phi(s))+\xi(s))ds.\label{fpe1}
\end{equation}
where $\chi$ is smooth, vanishing in a neighborhood of $0$ and $\chi(s)=1$ for $s>1$. This cutoff has the same effect as the mollification i.e. regularizing the problem in spatial scales $<\epsilon$.

It will be useful to introduce {\it dimensionless variables} in terms of which the cutoff $\epsilon=1$.
Define  a {space time scaling} operation $s_\mu$ by
$$
 (s_\mu \phi)(t,x):=\mu^{\frac{d-2}{2}} \phi(\mu^2t,\mu x).
$$
This scaling {preserves} the linear equation $
\dot\phi=\Delta\phi+\xi
$. We will now set
$$
\varphi:=s_\ep\phi.
$$
Then the KPZ and GL nonlinear terms 
$$
V_\epsilon(\phi)= \left\{
 \begin{array}{rl}
 (\partial_x\varphi)^2+c_\epsilon& \text{KPZ } \\
   \varphi^3 +r_\epsilon\varphi& \text{GL }  \end{array} \right.
$$
become
\begin{equation*}
v^{(\epsilon)}(\varphi)=\left\{
 \begin{array}{rl}
  \epsilon^{\frac{2-d}{2}}(\partial_x\varphi)^2+\epsilon^2c_\epsilon& \text{KPZ } \\
    \epsilon^{{4-d}}\varphi^3 + \epsilon^2r_\epsilon\varphi& \text{GL }  \end{array} \right.
\end{equation*}
and the fixed point problem \eqref{fpe1} becomes
\begin{equation}
\varphi(t)=\int_0^t\chi(t-s)e^{(t-s)\Delta}(v^{(\epsilon)}(\varphi(s))+\xi(s))ds:=G(v^{(\epsilon)}(\varphi)+\xi).\label{fpe2}
\end{equation}
In this dimensionless formulation the equation has  cutoff on unit scale (instead of scale $\epsilon$) and the nonlinearity is small if $d<2$ (KPZ), $d<4$ (GL). These are the   {\it subcritical} cases.
However,  $\varphi$  is now defined on $[0,\ep^{-2}T]\times(\ep^{-1}\T)^d$ i.e. we need to  control {\it arbitrary large times and volumes} as $\ep\to 0$ (we denote the noise  in \eqref{fpe2} again by  $\xi$: it equals in law the space time white noise on $[0,\ep^{-2}T]\times(\ep^{-1}\T)^d$).

\subsection{Renormalization Group}

Fix now a scale $L>1$ and solve the equation \eqref{fpe2} for  spatial scales $\in [1, L]$ ( temporal scales $\in [1,L^{2}]$).  Concretely, we insert 
$$\chi(t-s)=\chi(L^{-2}(t-s))+(1-\chi(L^{-2}(t-s))$$
 in \eqref{fpe2} so that
$$
G=G_0+G_1
$$
where  
$G_0$ involves scales $\in [1,L]$ and $G_1$   scales $\in [L,\epsilon^{-1}]$.
The problem  \eqref{fpe2}
is equivalent to 
\begin{align}
\varphi=\varphi_0+\varphi_1\label{sol1}
\end{align}
 with
\begin{align}
\varphi_0&=G_0(v^{(\epsilon)}(\varphi_0+\varphi_1)+\xi)\label{small}\\
\varphi_1&=G_1(v^{(\epsilon)}(\varphi_0+\varphi_1)+\xi)\label{large}
\end{align}
It turns out that \eqref{small} is easy to solve: it has  time  $\caO(L^2)$, noise is smooth and nonlinearity is small. The solution $\varphi_0$ is a function of $\varphi_1$:
$\varphi_0=\varphi_0(\varphi)$. 
 Inserting this 
 to large scale equation \eqref{large} get
$$
\varphi_1=G_1(v^{(\epsilon)}(\varphi_1+\varphi_0(\varphi_1))+\xi)
$$
This equation {has scales $\geq L$. } The final step consists of 
{rescaling back to scales $\geq 1$}. Define the scaling transformation by
 \begin{equation}\label{scaling}
 (s \varphi)(t,x):=L^{\frac{2-d}{2}} \varphi(L^{-2}t, L^{-1}x).
\end{equation}
and set
$$\varphi':=s^{-1}\varphi_1.
$$
 By simple change of variables we have $s^{-1}Gs=L^{2}G$ and $s\xi\stackrel{law}=L^2\xi$ which lead to  a {\it renormalized equation} for $\varphi'$:
$$
\varphi'=
G(v'(\varphi')+\xi).
$$
where
 \begin{equation}\label{reneq}
 v'(\varphi')=L^2s^{-1}v^{(\epsilon)}(s\varphi'+\varphi_0(s\varphi'))
\end{equation}
This is of the same form as the original equation except that $\varphi'(t,x)$ has 
  $t\in[0,{\epsilon'}^{-2}T]$ and $x\in ({\epsilon'}^{-1}\T)^d$  with {$\epsilon'=L\epsilon$} and the nonlinearity has changed to $v'$.
    The map 
    $$\caR: v^{(\epsilon)}\to v':=\caR v^{(\epsilon)}$$
     is the {\it Renormalization Group } map.
 Iterating this procedure we obtain a sequence of nonlinearities $\caR^nv^{(\epsilon)}$ and equations
 \begin{align}
\varphi=
G(\caR^nv^{(\epsilon)}(\varphi)+\xi).
\label{nth}
\end{align}
whose solution $\varphi$ describes solution of original PDE on scales $\geq  L^n\epsilon$. Indeed, the iteration of the equation \eqref{sol1} leads to the construction of the solution to the original equation  \eqref{fpe1} in terms of the one of \eqref{nth}.

We can now address the $\epsilon\to 0$ limit. Let us define the {\it effective equation} for scales $\geq \mu$ by
$$
v_\mu^{(\epsilon)}:=\caR^{\log(\mu/\epsilon)}v^{(\epsilon)}
$$
We try to  fix the counter terms so that for all $\mu$ the following  limit  exists:
\begin{align}
v_\mu:=\lim_{\epsilon\to 0}v_\mu^{(\epsilon)}
\label{conti}
\end{align}

\subsection{Linerization}
The RG map 
$\caR$ \eqref{reneq} is a composition of a two maps
$$\caR=\caS\circ\caT$$
where $\caS$ is the {scaling}
$$
v(\varphi)\to ( \caS v)(\varphi)=L^{2}s^{-1}v( s\varphi)
$$ 
and $\caT$ is a translation
$$
v(\varphi)\to v(\varphi+\psi)
$$
where
 $\psi$ is a random function  of $v$
solved from the short time problem
  \begin{equation}\label{psieq}
 \psi=G_0 (v(\varphi +\psi)+\xi).
 \end{equation}
Let  $\caL $ be the linearization of $\caR$: 
$\caR v=\caL v+\caO(v^2)$.
Since to first order in $v$ we have from \eqref{psieq}
$
 \psi=G_0 \xi+\caO(v)
 $
we get
 $$
( \caL v)(\varphi)=(\caS v)(\varphi+G_0 \xi)
$$
The scaling operator $\caS$ has local eigenfunctions 
$$
\caS\varphi^k=L^{\alpha_k}\varphi^k,\ \ \ \ \alpha_k=2-(k-1)\frac{_{d-2}}{^2}
$$
$$
\caS(\nabla\varphi)^k=L^{\beta_k}(\nabla\varphi)^k,\ \ \ \ \beta_k=2-\frac{_{k+1}}{^2}\ \ \ d=1
$$
The $\alpha_k>0$ ({\it relevant}) eigenfunctions expand under $\caL$ , $\alpha_k<0$  ({\it irrelevant}) ones contract.
 For GL the relevant ones are $\phi^k$, $k\leq 4-d$ and for KPZ $(\nabla\phi)^k$, $k\leq 2$. 
Iterating one obtains for GL in $d=3$
$$
\caL^n\varphi^3=L^{n}(\varphi+\eta_{L^{-n}})^3
$$
and for KPZ
$$
\caL^n(\nabla\varphi)^2=L^{\frac{n}{2}}(\nabla\varphi+\nabla\eta_{L^{-n}})^2
$$	
The random field $\eta_{L^{-n}}$  is a sum of contributions from $n$ scales and in fact it is given by the GFF \eqref{etadefi} with small scale cutoff $L^{-n}$:
\begin{equation}
\eta_{L^{-n}}(t)=\int_0^t(\chi(L^{2n}(t-s))-\chi(t-s))e^{(t-s)\Delta}\xi(s)ds
\label{etadefi}
\end{equation}
 In particular
\begin{align}
\E(\nabla\eta_{L^{-n}}(t,x))^2\sim L^n\label{blow1}
\end{align}
and
\begin{align}
\E(\eta_{L^{-n}}(t,x))^2\sim  \left\{
 \begin{array}{rl}
 \log L^n&{d=2}\\
    L^{n} &{d=3}  \end{array} \right.
\label{blow2}
\end{align}
 For KPZ in linear approximation effective equation becomes
$$
v_\mu^{\epsilon}=\mu^\hf(\nabla\varphi+\nabla\eta_{\epsilon/\mu})^2+\mu^2c_\epsilon
$$
and for GL one gets
$$
v_\mu^{\epsilon}=\mu^{4-d}(\varphi+\eta_{\epsilon/\mu})^3+\mu^2r_\epsilon\varphi
$$
Due to \eqref{blow1} and \eqref{blow2} these have no limit  as $\epsilon\to 0$.

Why did this happen? 
For  KPZ the nonlinearity $(\nabla\varphi)^2$ is relevant with exponent $\hf$ but has size $\epsilon^\hf$ which reproduces under iteration. 
   However $\caR$ produces a {\it more relevant} term, constant in $\varphi$ with exponent $\frac{_3}{^2}$ and size  $\epsilon^\hf$.
    This expands under iteration to $(\frac{_\mu}{^\epsilon})^\frac{_3}{^2}\epsilon^\hf=\caO(\epsilon^{-1})$. 
 The
  solution is obvious: fix the constant  $c_\epsilon$ so as to cancel the divergence
  $$
  c_\epsilon=\E\, (\nabla\eta_\epsilon)^2=a\epsilon^{-1}
  $$
Then the effective equation becomes
$$
v_\mu^\epsilon=\mu^\hf[(\nabla\varphi)^2+2\nabla\varphi\nabla\eta_{\epsilon/\mu}+:(\nabla\eta_{\epsilon/\mu}
)^2:]
$$
where 
$$:(\nabla\eta_{\epsilon/\mu}
)^2:=(\nabla\eta_{\epsilon/\mu}
)^2-\E(\nabla\eta_{\epsilon/\mu})
^2$$
For the GL equation $\caR$ produces a relevant linear term in $\varphi$ with exponent $2$. Taking
 $$
  r_\epsilon=\E\, \eta_\epsilon^2
  $$
the effective equation becomes
$$
v_\mu^\epsilon=\mu^{4-d}[\varphi^3+3\varphi^2\eta_{\epsilon/\mu}+3\varphi:\eta_{\epsilon/\mu}^2:+:\eta_{\epsilon/\mu}^3:]
$$
The limits
 $$\lim_{\epsilon\to 0} :(\nabla\eta_{\epsilon/\mu}(t,x) )^2:\ \ =\ \ :(\nabla\eta(t,x)  )^2:
 $$
 \begin{align}\label{wick}
 \lim_{\epsilon\to 0}:\eta_{\epsilon/\mu}(t,x) ^k:\ \ =\ \ :\eta(t,x) ^k:
 \end{align}
 are distribution valued random fields, the Wick powers of the GFF. Hence in the linear approximation to the RG the limit \eqref{conti} exists a.s. as a distribution.
 
\subsection{Outline of the proof}
Let us start with GL in $d=2$. 
Denote the result of the linear approximation by
$$
u_\mu^\epsilon=\mu^{2}:(\varphi+\eta_{\epsilon/\mu})^3:
$$
and write 
$$
v_\mu^\epsilon=u_\mu^\epsilon+w_\mu^\epsilon.
$$
Since $\caL u_\mu^\epsilon= u_{L\mu}^\epsilon$ we get
$$
w_{L\mu}^\epsilon=\caL w_\mu^\epsilon+\caO(\mu^4).
$$

In $d=2$  we expect from the scaling eigenfunction analysis that $\|\caL\|\leq CL^2$ in a suitable space.  Thus we expect
$$
\|w_{L\mu}^\epsilon\|\leq CL^2 \|w_{\mu}^\epsilon\|+C \mu^4.
$$
Suppose, inductively in the scale $\mu$ that we have shown
 \begin{align}\label{indu}
\|w_\mu^\epsilon\|\leq \mu^{2+\delta}, \ \ \ \delta>0.
 \end{align}
Then $$
\|w_{L\mu}^\epsilon\|\leq CL^2\mu^{2+\delta}+C \mu^4\leq  (L\mu)^{2+\delta}
$$
provided we take $L>\caO(1)$ and $\mu<C(L)$. Thus we can inductively prove  \eqref{indu}  for scales $\mu\leq\mu_0
$.
This becomes a proof once we work in a suitable Banach space of $v$'s. Thus normal ordering suffices to make the PDE well posed.

Now consider $d=3$:
$$
u_\mu^\epsilon=\mu:(\varphi+\eta_{\epsilon/\mu})^3:
$$
and  this time $\|\caL\|\leq CL^{5/2}$ so that
$$
\|w_{L\mu}^\epsilon\|\leq CL^{5/2}\|w_{\mu}^\epsilon\| +C \mu^2
$$
Since $5/2>2$ this is not good! The linear part expands too rapidly compared with the smallness of the nonlinear contributions for the inducive argument to work. The remedy is  to compute $
v_\mu^\epsilon$ explicitly to the second order:
$$
v_\mu^\epsilon=u_\mu^\epsilon+U_\mu^\epsilon+w_\mu^\epsilon.
$$
If we could show that the second order term satisfies the bound
$$
\|U_\mu^\epsilon\|\leq C\mu^2
$$
we would get
$$
\|w_{L\mu}^\epsilon\|\leq L^{5/2}\|w_{\mu}^\epsilon\| +C \mu^3
$$
and since 
$5/2<3$ we may proceed inductively as in $d=2$  to show
 \begin{align*}
\|w_\mu^\epsilon\|\leq \mu^{5/2+\delta}, \ \ \ \delta>0.
 \end{align*}
for $\mu\leq\mu_0$,

However, $ \|U_\mu^\epsilon\|$ {diverges} as $\log\epsilon$!
$ U_\mu^\epsilon$ is a (nonlocal) polynomial in $\varphi$ and $\eta_{\epsilon/\mu}$. We
expand $ U_\mu^\epsilon$  in Wiener chaos (i.e. Wick polynomials). The result is 
$$
U_\mu^\epsilon= b\mu^2\log(\epsilon/\mu)\varphi+\tilde U_\mu^\epsilon
$$
where $\lim_{\epsilon\to 0}\tilde U_\mu^\epsilon$ exists as a random field .
Hence we learn that we
need to add an additional mass counter term to the equation
$$
v^{(\epsilon)}=\epsilon\varphi^3+\epsilon^2(a\epsilon^{-1}+b\log\epsilon)\varphi.
$$
In the 
original PDE this means 
$$
\phi^3\to\phi^3+(a\epsilon^{-1}+b\log\epsilon)\phi
$$
Why did this happen?
The linear term is {\it relevant} in 1st order and  neutral ({\it marginal}) in 2nd order.
Marginal terms can pile up logarithmic divergences upon iteration. The counter term prevents this. We get
$$
v_\mu^{(\epsilon)}=\mu\varphi^3+\mu^2(a\mu^{-1}+b\log\mu)\varphi+\dots
$$
Note that this is  {\it small} as long as $\mu$ is. Nothing is diverging!

In {KPZ} coupling constant is $\epsilon^\hf$ and $\|\caL\|=L^{3/2}$ so we need to expand $v_\mu^{(\epsilon)}$  to {  3rd order}. 
By "miracle" 2nd and 3rd order terms have {vanishing relevant and marginal terms}. The random fields occurring in them have  $\epsilon\to 0$ limits and no new renormalizations are needed.
This is {not true} for the {multicomponent KPZ}: this is the source of the  $\log\epsilon$ constant counter term coming from  in third order where constants are marginal.

In this heuristic discussion we have assumed perturbative terms $u^\epsilon_\mu$ have the obvious bounds in powers of $\mu$.  
This can not be true since they involve the random fields $:\eta^k:$, $:(\nabla\eta)^2:$ etc.
These noise fields belong to Wiener chaos of bounded order
and their covariance is in a suitable negative Sobolev space.  
Hypercontractivity implies good moment estimates for them and a
 Borel-Cantelli argument implies that  a.s. we can find a $\mu_0>$ s.t.  $\|u^\epsilon_\mu\|$ has a good bound for $\mu<\mu_0$.
On that event  the $\caR$ is controlled by a simple application of contraction mapping in a suitable Banach space. 
The time of existence of the original SPDE is $\mu_0^2$ and it is a.s. $>0$.

Finally, let us briefly discuss the  domain and range of $v_\mu^\epsilon(\varphi)$. Recall  $v_\mu^\epsilon=v_\mu^\epsilon(t,x;\varphi)$  is a function on space time and a nonlinear functional of the field $\varphi$. Consider first its dependence of $(x,t)$. 
In the GL case the random fields in the perturbative part $v_\mu^{(\epsilon)}$  (i.e. fields such as $\eta$, $:\eta^2:$ etc) are distributions which are in $H_{loc}^{-2}$
in their time dependence and in $H_{loc}^{-4}$ in their space dependence. This leads us to let $v_\mu^\epsilon$ take values in $H^{-2,-4}_{loc}$. 

As for the $\varphi$-dependence of $v_\mu^\epsilon(t,x;\varphi)$ we need to discuss the domain, i.e. in what space should the argument $\varphi$ be taken.
Since $\varphi$ represents the large scale part of the solution %
we can take $\varphi$ smooth. Explicitly we let
 $$\varphi\in C^{2,4}([0,\mu^{-2}T]\times \mu^{-1}\T^d)
  $$
  We then  prove that 
  $$
  v_\mu^\epsilon
 :C^{2,4}\to H^{-2,-4}_{loc}
  $$
is an analytic function in a ball of radius $\mu^{-\alpha}$, $\alpha>0$.

\subsection{Subcritical equations} KPZ$_{d=1}$ and GL$_{d<4}$ are {\it subcritical}:
the dimensionless strength of nonlinearity is small in short scales. Another example is
the {\it Sine-Gordon equation}
$$
\partial_t \phi=\Delta \phi+g\sin(\sqrt\beta\phi)+\xi
$$
After normal ordering dimensionless coupling is
$$
\epsilon^{2-\frac{\beta}{8\pi}}g.
$$
This is subcritical for $\beta< 16\pi$. Here one needs to
expand solution to order $k-1$ where $(2-\frac{\beta}{8\pi})k>2$.
So $k\to\infty$ as $\beta\uparrow 16\pi$.
It is a challenge to carry this out for all $\beta<16\pi$. Hairer and Shen have
controlled the case $\beta<\frac{32\pi}{3}$ \cite{hairers}.

\section{Liouville QFT}

The QFT's discussed in the previous Section are quite simple from the renormalization group point of view: they are {\it superrenormalizable} which means that the counter terms can be found without a multi scale analysis by looking at a few orders of perturbation series (Picard iteration above). We will now discuss another QFT, the Liouville model, that can be considered superrenormalizable but which has several  interesting features and applications. Its motivation comes from random surface theory and two dimensional quantum gravity.  I will discuss work done with F. David, R. Rhodes and V. Vargas to give a rigorous construction of the Liouville model \cite{DKRV,DKRV2, krv}.

\subsection{Random Surfaces} Let $\mathcal{T}_{N}$  be the set of triangulations of the 2-sphere ${S}^2$  with $N$  faces, three of which are   marked. 
$T\in\mathcal{T}_{N}$ is a graph with topology  of ${S}^2$  and each face has three boundary edges. We will consider a two-parameter family of probability measures $\P_{{\mu_0},\gamma}$ on 
$\mathcal{T}=\cup_N\mathcal{T}_{N}$ defined by
\begin{align}
\P_{{\mu_0},\gamma}(T)=\frac{1}{Z_{{\mu_0},\gamma}}
e^{-{{\mu_0}} N}Z_{{\gamma}}(T)
\label{Pdef}
\end{align}
if $T\in\caT_N$.  $Z_{{\gamma}}(T)$ is the partition function of  a {\it critical lattice model}
on the graph $T$. Such models are defined for $\gamma\in [\sqrt{2},2]$ and some examples are
percolation for $\gamma=\sqrt{8/3}$, Ising model  $\gamma=\sqrt{3}$, discrete GFF for $\gamma=2$, uniform spanning tree $\gamma=\sqrt{2}$.  It is known that
\begin{equation}
Z_N:=\sum\limits_{T \in \mathcal{T}_{N}} Z_{\gamma}(T)=N^{1-\frac{4}{\gamma^2}}e^{\bar \mu N}(1+o(1))\label{ASY}
\end{equation}
so that $\P_{\mu_0,\gamma}$ is defined for ${\mu_0}>\bar \mu$.   $Z_{{\mu_0},\gamma}$ diverges
 as $\mu_0\to\mu $ so that the measure concentrates on large triangulations in that limit.

Each $T$ has a natural {conformal structure} where each face $f$ is equilateral with unit area. Then there is a unique conformal map  $\psi: T\to{S}^2$ s.t. centers of marked faces map to $z_1,z_2,z_3$ . Let $\nu_T$ be the image of the area measure on $T$.
Under $\P_{{\mu_0},\gamma}$,  $\nu_{T}$ becomes a random measure $\nu_{{{\mu_0}},\gamma}$ on ${S}^2$. 

Consider now a {\it scaling limit} as follows. 
Recalling that as ${\mu_0}\downarrow\bar\mu$ typical size of triangulation diverges we define
for $\mu>0$ 
$$
\rho^{(\epsilon)}_{\mu,\gamma}:=\epsilon\nu_{{\bar\mu+\epsilon\mu},\gamma}
$$
so that the image triangles have area $\epsilon$.
It is natural to conjecture that $\rho^{(\epsilon)}_{\mu,\gamma}$ converges in law as $\epsilon\to0$ to 
a random measure $\rho_{\mu,\gamma}$. 
Since $\epsilon\nu_{T}({S}^2 )=\epsilon N$ the law of $\rho^{(\epsilon)}_{\mu,\gamma}({S}^2 ) $ is given by using \eqref{ASY} 
$$
\E[  F(\rho^{(\epsilon)}_{\mu,\gamma}({S}^2 )   )  ]= \frac{1}{Z_{\epsilon}}\sum_N e^{-\mu \epsilon N}N^{1-\frac{4}{\gamma^2}}
F(\epsilon N )+o(1).
$$
Hence this law converges to $\Gamma(2-\frac{4}{\gamma^2},\mu ) $.
We will construct a measure  with this law for its mass.

\subsection{KPZ Conjecture} Let $g(z)|dz|^2$ be a smooth conformal metric on the Riemann sphere $\hat \C=\C\cup\{\infty\}$.  Kniznik, Polyakov and Zamolochicov \cite{KPZ} argued that the random measure  $\rho_{\mu,\gamma}$ is given by
\begin{align}
\rho_{\mu,\gamma}(dz)=e^{\gamma \phi_g(z)}
dz\label{rho}
\end{align}
where $ \phi_g$ is
 the {\it Liouville field}
\begin{align}\label{Liouville field}
\phi_g:=X+\frac{_Q}{^2}\ln g
\end{align}
and
$X$ is a random field whose law  is  formally given by 
\begin{align}
\E_{\gamma,\mu}\,  f(X)=Z^{-1}\int_{Map(\C\to\R)} f(X)\, e^{-S_L(X,{g})}DX.
\label{funci}
\end{align}
where $S_L$ is action functional of the {\it Liouville model}: 
\begin{align}
S_L(X,{g}):= \frac{1}{\pi}
\int_{\C}\big(\partial_z X\partial_{\bar z} X+\frac{_Q}{^4}gR_{{g}} X  +\mu e^{\gamma \phi_g  }\big)\,dz.
\label{actio}
\end{align}
 Here 
 $R_g=-4g^{-1}\partial_z \partial_{\bar z}\log g
$ is the scalar curvature and 
$Q$ is related to $\gamma$ by
$$Q=2/\gamma+\gamma/2.$$
Furthermore the heuristic integration over $X$ in \eqref{funci} is supposed to include "gauge fixing" due to the marked points $z_1,z_2,z_3$.

\subsection{GFF} Let us first keep only the quadratic term in the action functional \eqref{actio} and try to define the linear functional
$$
\langle F\rangle=\int_{Map(\C\to\R)} F(X)e^{-\frac{1}{4\pi}
\int_{\C}|\partial_zX |^2dz}
DX
$$
We may define this in terms of the Gaussian Free Field. GFF  on the full plane is defined up to constant and we fix this by considering the field $X_g$ with zero average in the metric $g$: 
$$
m_g(X_g):=\frac{1}{\int_\C  g(z)dz}\int_\C X_g(z)\, g(z)dz=0.
$$
Then we set $X=X_g+c$, $c\in\R$  and  define
$$
\langle F\rangle=\int_\R (\E\, F(X_g+c))dc:=\int F(X)d\nu_{GFF}(X).
$$
Note that 
 $\nu_{GFF}(dX) =\P (dX_g)dc$ is {\it not} 
a probability measure. This measure  is {\it independent of the chosen metric} since 
$$X_{g'}\stackrel{law}{=}X_{g}-m_{g'}(X_g)$$
 where $m_{g'}(X_g)$ is a random constant that can be absorbed to a shift in $c$.

We can now give a tentative definition of the measure in \eqref{funci} by defining
\begin{align}
\nu_{g}=e^{- \frac{1}{4\pi}\int_{\C}(QR_{g} X  + \mu e^{\gamma X_g  }\,)gdz}\nu_{GFF} .\label{tenta}
\end{align}
However, now we encounter the problem of renormalization as $ e^{\gamma X  }$ is not defined since $X_g$ is not defined point wise. Indeed 
$\E X_g(z)X_g(z')=\ln|z-z'|^{-1}+\caO(1)$ as $z-z'\to 0$.

\subsection{Multiplicative Chaos} To define $ e^{\gamma X  }$ we proceed as in Section 2 by taking a mollified  version of GFF   $X_{g,\epsilon}$. Then
$
\E e^{\gamma X_{{g},\epsilon}(z)}\propto \epsilon^{-\frac{_{\gamma^2}}{^2}}$
and we renormalize by defining thr random measure on $\C$
$$
M_{g,\gamma,\epsilon}(dz):=\epsilon^{\frac{_{\gamma^2}}{^2}}e^{\gamma (X_{{g},\epsilon}(z)+\frac{_{ Q}}{^2}\ln g(z))}
dz
$$
Then
$$
M_{g,\gamma, \epsilon}\to M_{ g,\gamma} $$ 
 weakly in probability as $\ep\to 0$. The limit is nonzero if and only if $\gamma<2$. It is  an example of {\it Gaussian multiplicative chaos} (see \cite{DRSVreview} for a review), a random
{\it multifractal} measure on $\C$ for which   a.s.  $M_{ g,\gamma}(\C)<\infty$. We may now define
\eqref{tenta} as
\begin{align}
\nu_g=e^{- \frac{1}{4\pi}(\int_{\C}QR_{g} X gdz + \mu e^{\gamma c  }M_{ g,\gamma}(\C))}\nu_{GFF} .\label{tenta}
\end{align}

\subsection{Weyl and M\"obius invariance} We saw that $X$ is metric independent under $\nu_{GFF}$. Recaling the { Liouville field} \eqref{Liouville field} we have
\begin{proposition}\label{prop}
Let $F\in L^1(\nu_g)$ and 
$g'=e^{\varphi}g$. Then
\begin{align*}
\int F(\phi_{g'})d\nu_{g'}=e^{\frac{c_L-1}{96\pi}\int |\partial \varphi|^2\,dz+ \int 2R_{{g}}\varphi \,gdz}\int F(\phi_{g})d\nu_{g}
\end{align*}
where $c_L=1+6Q^2$.
\end{proposition}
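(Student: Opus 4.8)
\noindent\emph{Strategy.} The pivot is the metric independence of $\nu_{GFF}$: since $X_{g'}\stackrel{law}{=}X_g-m_{g'}(X_g)$ and $c$ is integrated against Lebesgue measure, $\int F(X)\,d\nu_{GFF}$ does not care which metric realizes it. Hence all the $g$-dependence of $\nu_g$ sits in three explicit factors: the argument $\phi_g=X+\frac{Q}{2}\ln g$ of $F$, the curvature coupling $\frac{Q}{4\pi}\int_{\C}R_gX\,g\,dz$, and the cosmological term $\frac{\mu}{4\pi}e^{\gamma c}M_{g,\gamma}(\C)$. I would realize the Gaussian part of \emph{both} $\nu_g$ and $\nu_{g'}$ by the one field $X_g$ (vanishing $g$-mean, covariance $G_g$), compute how each of the three factors changes under $g'=e^{\varphi}g$, and show that these recombine into the stated prefactor with $\int F(\phi_g)\,d\nu_g$ left over.

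\noindent\emph{Curvature coupling.} With $X=X_g+c$, the part of $e^{-\frac{Q}{4\pi}\int R_{g'}X g'\,dz}$ linear in $X_g$ is a Girsanov factor $e^{\ell(X_g)}$, $\ell(X_g)=-\frac{Q}{4\pi}\langle R_{g'}g',X_g\rangle$; integrating it against the law of $X_g$ yields the deterministic constant $e^{\frac12\E\ell(X_g)^2}$ and the translation $X_g\mapsto X_g+h$, $h=-\frac{Q}{4\pi}G_g\ast(R_{g'}g')$. Using $\Delta_zG_g(z,w)=-2\pi\delta(z-w)+2\pi g(z)/V_g$, the two-dimensional conformal anomaly $R_{g'}g'=R_gg-\Delta\varphi$, Gauss--Bonnet $\int_{\C}R_gg\,dz=8\pi$, and the analogous facts for $g$, I would derive the key identity $h=h_g-\frac{Q}{2}\varphi+\kappa$ ($h_g$ the shift for $g$ itself, $\kappa=\frac{Q}{2}m_g(\varphi)$ a constant). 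Then, after the shift, $\phi_{g'}=X_g+h+c+\frac{Q}{2}\ln g+\frac{Q}{2}\varphi=\phi_g+\kappa$: the $\frac{Q}{2}\varphi$ produced by $\ln g'=\ln g+\varphi$ is exactly cancelled, $\kappa$ being absorbed by translating $c$; and $\frac12\E\ell(X_g)^2=-\frac{Q}{8\pi}\int R_{g'}g'\,h$, which minus its $g$-counterpart and reduced by the anomaly relation produces the Polyakov--Liouville part of the prefactor. The free-field contribution (the ``$1$'' in $c_L=1+6Q^2$) is absent precisely because $\nu_{GFF}$ carries no metric-dependent determinant.

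\noindent\emph{Cosmological term and accounting.} The chaos transforms by the Weyl rule $M_{g',\gamma}(dz)=e^{\frac{\gamma Q}{2}\varphi(z)}M_{g,\gamma}(dz)$ (immediate, for a fixed mollifier, from $\ln g'=\ln g+\varphi$ and the $\epsilon^{\gamma^2/2}$ renormalization; with a metric-adapted mollifier up to an extra constant), reflecting that $e^{\gamma\phi}$ has conformal weight $\Delta_\gamma=\frac{\gamma}{2}(Q-\frac{\gamma}{2})=1$. The Girsanov shift of the previous step acts inside $M_{g',\gamma}$ as well, turning $M_{g',\gamma}(\C)$ into $\int e^{\frac{\gamma Q}{2}\varphi(z)+\gamma h(z)}M_{g,\gamma}(dz)$; inserting $h=h_g-\frac{Q}{2}\varphi+\kappa$, the $\varphi$-exponents cancel and one gets $e^{\gamma\kappa}\int e^{\gamma h_g(z)}M_{g,\gamma}(dz)$ --- exactly the cosmological term of $\nu_g$ after its own shift, the $e^{\gamma\kappa}$ being killed by the $c$-translation inside $e^{\gamma c}$. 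What remains is to collect the field-independent pieces --- the factor $e^{2Q\kappa}$ from that translation inside $e^{-2Qc}$, and the constants in $\frac12\E\ell^2$ --- and check they reassemble the stated prefactor with all zero-mode debris ($m_g(\varphi)$, the $g/V_g$ terms) cancelling. (Möbius covariance under an automorphism $\psi$ of $\hat{\C}$ is entirely analogous, and easier, using conformal covariance of the GFF and chaos and that $\psi^{\ast}g$ is again admissible.)

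\noindent\emph{Main obstacle.} The real care lies in that final bookkeeping: one must verify that the anomaly identity $R_{g'}g'=R_gg-\Delta\varphi$, Gauss--Bonnet, the constant $\kappa$, the $g/V_g$ correction in $\Delta h$, and the $c$-translation conspire to leave precisely the Polyakov--Liouville functional with coefficient $\frac{c_L-1}{96\pi}$ and nothing spurious. Subsidiary technical points are the Weyl covariance of $M_{g,\gamma}$ with the exact constant when a metric-adapted mollification is used, and checking (by elliptic regularity, the right-hand sides being smooth) that $h,h_g$ are continuous functions, so that translating the GFF by them inside the chaos is legitimate.
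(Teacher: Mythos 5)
Your proposal is correct and follows essentially the same route as the paper's (much more compressed) proof: metric independence of $\nu_{GFF}$ to realize both sides with $X_g$, the conformal anomaly relation $R_{g'}g'=R_gg-\Delta\varphi$ together with Gauss--Bonnet, and a Girsanov shift whose translation $-\frac{Q}{2}\varphi$ (plus a constant absorbed into $c$) cancels the extra $\frac{Q}{2}\varphi$ in $\phi_{g'}$ and in the chaos measure, while its variance produces the $\frac{Q^2}{16\pi}\int|\partial\varphi|^2=\frac{c_L-1}{96\pi}\int|\partial\varphi|^2$ anomaly. The only difference is organizational (you apply Girsanov to the full $g'$-curvature coupling and then compare shifts, whereas the paper shifts only by the difference term $\int\Delta\varphi\,X_g$), and your added bookkeeping of the zero-mode constants and the chaos covariance fills in exactly the details the paper defers to \cite{DKRV}.
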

\begin{proof}(see \cite{DKRV} for details)
By  metric independence of $X$ we replace $c+X_{g'}$ by $c+X_{g}$ so that
\begin{align*}
\int F(\phi_{g'})d\nu_{g'}&=\int F(\phi_{g}+\frac{_Q}{^2} \varphi)e^{- \frac{Q}{4\pi}\int R_{g'} g'(c+X_g)dz+\mu  e^{\gamma c} \int e^{
\frac{_Q}{^2}\ln \varphi}dM_{g,\gamma})}d\nu_{GFF} .
\end{align*}
Use $ R_{g'} g'=R_{g} g-\Delta\varphi$ and  Gauss-Bonnet theorem $\int R_{g'} g'=8\pi=\int R_{g} g$ to get
\begin{align*}
\int R_{g'} g'(c+X_g)dz=\int R_{g} g(c+X_g)dz-\int \Delta\varphi X_gdz.
\end{align*}
Then  a shift in the Gaussian integral (Girsanov theorem) 
completes the proof.\end{proof}

\noindent The multiplicative factor is called the {\it Weyl anomaly} in physics literature and $c_L$ is the  {\it central charge} of Liouville theory. As a consequence of the Proposition we get M\"obius transformation rule (see see \cite{DKRV})
\begin{corollary}\label{mobius} Let $\psi$ be a M\"obius map of $\hat C$. Then
\begin{align*}
\int F(\phi_{g})d\nu_{g}=\int F(\phi_{g}\circ\psi+Q\ln|\psi'|)d\nu_{g}
\end{align*}
\end{corollary}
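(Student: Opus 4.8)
\emph{Strategy.} The plan is to convert composition with $\psi$ into a Weyl change of the conformal metric, apply Proposition~\ref{prop}, and then observe that the residual anomaly factor is a character of the M\"obius group, hence trivial.

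\emph{Reduction to a Weyl transformation.} For a M\"obius map $\psi$ I would set $\psi^*g:=|\psi'|^2\,(g\circ\psi)$; this is again a smooth conformal metric on $\hat\C$ in the conformal class of $g$ (near the pole of $\psi$ the zero of $g\circ\psi$ compensates the pole of $|\psi'|^2$). Since $\phi_g=X_g+c+\tfrac{Q}{2}\ln g$, one has the identity $\phi_g\circ\psi+Q\ln|\psi'|=X_g\circ\psi+c+\tfrac{Q}{2}\ln(|\psi'|^2\,g\circ\psi)=\phi_{\psi^*g}$, with the GFF underlying the right-hand side realised as $X_g\circ\psi$. The first real step is then to prove the \emph{coordinate covariance}
$$
\int F(\phi_g\circ\psi+Q\ln|\psi'|)\,d\nu_g=\int F(\phi_{\psi^*g})\,d\nu_{\psi^*g},
$$
expressing that $\nu_g$ transforms covariantly under the substitution $z\mapsto\psi(z)$. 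The ingredients are: $X_g\circ\psi\stackrel{\mathrm{law}}{=}X_{\psi^*g}$, because the GFF Green function with the zero-$g$-mean normalisation is a conformal scalar under $\psi$; the term $\tfrac{Q}{4\pi}\int R_gXg\,dz$ is a coordinate scalar, with zero-mode part $2Qc$ by Gauss--Bonnet; the chaos measure is covariant, $M_{g,\gamma}\circ\psi=M_{\psi^*g,\gamma}$, which is precisely where the value $Q=2/\gamma+\gamma/2$ enters, making the change-of-variables exponent $|\psi'|^{\,2+\gamma^2/2-\gamma Q}$ equal to $1$; and $\nu_{GFF}$ is metric independent, so the law of $X_g\circ\psi+c$ is again $\nu_{GFF}$.

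\emph{Applying the Proposition.} Writing $\psi^*g=e^{\varphi_\psi}g$ with $\varphi_\psi:=\ln(|\psi'|^2\,g\circ\psi/g)$ smooth, Proposition~\ref{prop} with $g'=\psi^*g$ together with the coordinate covariance gives $\int F(\phi_g\circ\psi+Q\ln|\psi'|)\,d\nu_g=e^{\caA_g(\varphi_\psi)}\int F(\phi_g)\,d\nu_g$, where $\caA_g(\varphi)=\tfrac{c_L-1}{96\pi}\int|\partial\varphi|^2\,dz+\int 2R_g\varphi\,g\,dz$. Hence the Corollary is equivalent to $\caA_g(\varphi_\psi)=0$ for every M\"obius $\psi$. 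For this I would use that iterating Proposition~\ref{prop} yields the cocycle relation $\caA_g(\varphi_1+\varphi_2)=\caA_g(\varphi_1)+\caA_{e^{\varphi_1}g}(\varphi_2)$, and that $\caA_g$ is invariant under holomorphic coordinate changes, $\caA_{\sigma^*g}(\varphi\circ\sigma)=\caA_g(\varphi)$, since both the Dirichlet energy $\int|\partial\varphi|^2\,dz$ and $\int R_g\varphi\,g\,dz$ are. Because $\varphi_{\psi_1\psi_2}=\varphi_{\psi_2}+\varphi_{\psi_1}\circ\psi_2$, these two facts make $\psi\mapsto\caA_g(\varphi_\psi)$ a homomorphism from $\mathrm{PSL}_2(\C)$ to $(\R,+)$; as $\mathrm{PSL}_2(\C)$ is perfect, this homomorphism vanishes, which proves the Corollary. (Equivalently one can check $\caA_{\hat g}(\varphi_\psi)=0$ by hand: it is $0$ for $\psi$ in the isometry subgroup $\mathrm{PSU}(2)$, where $\varphi_\psi=0$, and for the dilations $z\mapsto\lambda z$ it reduces to an explicit integral using Gauss--Bonnet and the constant-curvature equation $\Delta\varphi_\psi=R_{\hat g}\hat g(1-e^{\varphi_\psi})$; by the Cartan decomposition $\mathrm{PSL}_2(\C)=\mathrm{PSU}(2)\cdot A\cdot\mathrm{PSU}(2)$ this suffices.)

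\emph{Main obstacle.} The delicate point is the coordinate covariance of the chaos measure. $M_{g,\gamma}$ exists only as an $\ep\to0$ limit of a regularisation whose mollification scale is distorted by the factor $|\psi'|$ under $z\mapsto\psi(z)$, so $M_{g,\gamma,\ep}\circ\psi$ is not literally $M_{\psi^*g,\gamma,\ep}$; one must compare the two mollification schemes and show the difference disappears in the limit, the covariance itself hinging on the exact value of the GMC scaling exponent $\gamma Q=2+\gamma^2/2$. The GFF part of the coordinate covariance and the anomaly cocycle are comparatively routine once Proposition~\ref{prop} is available.
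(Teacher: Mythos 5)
Your argument is correct, and it implements exactly what the survey asserts (``as a consequence of the Proposition''), but it is organized differently from the proof the paper points to in \cite{DKRV}. There the M\"obius covariance is obtained by a direct computation: one uses that the whole-plane GFF modulo constants is M\"obius invariant, writes $X_{g}\circ\psi\stackrel{law}{=}X_{g}-m(X_{g}\circ\psi)$, absorbs the random constant into the $dc$-integration, and tracks the Jacobian factors in the chaos measure (via the GMC scaling exponent $\gamma Q=2+\gamma^2/2$) and in the curvature term (via Gauss--Bonnet); no appeal to the Weyl anomaly is made. You instead factor the statement as (i) diffeomorphism covariance $\int F(\phi_g\circ\psi+Q\ln|\psi'|)\,d\nu_g=\int F(\phi_{\psi^*g})\,d\nu_{\psi^*g}$ plus (ii) the Weyl anomaly for $g'=\psi^*g=e^{\varphi_\psi}g$, and then you must supply the additional fact that the anomaly vanishes on the M\"obius orbit --- a point the paper's one-line derivation silently assumes, since the Corollary asserts equality with no multiplicative factor. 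Your treatment of that point is the genuinely new content: the cocycle identity from iterating Proposition~\ref{prop}, the coordinate invariance of the anomaly functional, and the resulting homomorphism $\mathrm{PSL}_2(\C)\to(\R,+)$, which is trivial by perfectness. This is correct (and equivalent to the classical fact, via Polyakov's formula, that isometric metrics have equal Liouville action), and it is arguably cleaner and more structural than the explicit integral check. You also correctly isolate the real analytic work, namely (a) $X_g\circ\psi\stackrel{law}{=}X_{\psi^*g}$ for the zero-mean normalisation and (b) the covariance of the chaos measure under the distortion of the mollification scale; both are established in \cite{DKRV} (the latter going back to Kahane's uniqueness theorem), so your proof is complete modulo those imported results.
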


\subsection{Vertex operators} Since the  M\"obius group is non-compact the Corollary makes one suspect that the measure $\nu_g$ does not have a  finite mass. Indeed, by Proposition \ref{prop} we may work with the round metric $\hat g$ where $R_{\hat g}=2$. Then  $\frac{1}{4\pi}\int R_{\hat g} \hat gdz=2c$ by Gauss-Bonnet and $\int X_{\hat g}\hat gdz=0$. We get
$$
\int 1 d\nu_{\hat g}=\int\, \E_{\hat g}\, e^{- 2Qc}  e^{-  \mu e^{\gamma c} M_{\hat g,\gamma}{(\C)}}
dc=\infty
$$ 
as the integral diverges at $c\to -\infty$ and $M_{\hat g,\gamma}{(\C)}<\infty$ a.s.. 

Recall that we are looking for a measure with three points on $\hat \C$ fixed. We define (regularized) {vertex operators}
$$
V_{\alpha,\epsilon}(z):=\epsilon^{\frac{_{\alpha^2}}{^2}}e^{\alpha\phi_{\hat g,\epsilon}(z)}
$$
and consider their correlation function 
$$
\langle \prod_{i=1}^nV_{\alpha_i}(z_i)\rangle_{\hat g}:=\lim_{\epsilon\to 0}\int  \prod_{i=1}^nV_{\alpha_i,\epsilon}(z_i)d\nu_{{\hat g}}$$
Now the $c$-integral converges  if and only if  $\sum\alpha_i>2Q$:
$$
\int_\R e^{(\sum\alpha_i-2Q)c-\mu e^{\gamma c}M_{ \hat g,\gamma}(\C)}\,dc = \gamma^{-1}{\mu^{-s}}\Gamma(s) M_{ \hat g,\gamma}(\C)^{-s}
$$
with 
$s=\gamma^{-1}(\sum_i\alpha_i-2Q)$. The remaining expectation over the GFF %
can be dealt with a shift of $X_{\hat g}$ to dispose of $\prod_i e^{\alpha_i X_{{\hat g},\epsilon}(z_i)}$  . The result is after some calculation (\cite{DKRV}, \cite{krv})
\begin{equation*}
\langle \prod_{i=1}^nV_{\alpha_i}(z_i)\rangle_{\hat g}=const. \prod_{j < k} \frac{1}{|z_j-z_k|^{\alpha_j \alpha_k}} \mu^{-s} \gamma^{-1}\Gamma(s)\E\,  M_{ \hat g,\gamma}(F)^{-s}  
\end{equation*}
where
\begin{equation*}
F(z)=   \prod_i \frac{1}{|z-z_i|^{\gamma \alpha_i}}  {\hat  g}(z)^{ - \frac{\gamma}{4} \sum_l \alpha_l } .
\end{equation*}
The {modulus of continuity} of the Chaos measure is
$$
M_{\hat g}(B_r)\leq C(\omega)r^{\gamma Q-\delta}
$$
 for any $\delta>0$. This leads to integrability of $F$ if $\alpha_i<Q$ for all $i$ and
\begin{proposition}
$0<\langle \prod_{i=1}^nV_{\alpha_i}(z_i)\rangle_{\hat g}<\infty$ if and only if {$\sum\alpha_i>2Q$} and  $\alpha_i<Q$.
\end{proposition}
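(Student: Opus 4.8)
The plan is to read off both Seiberg bounds from the closed formula for the correlation function recalled above,
\begin{equation*}
\langle \prod_{i=1}^nV_{\alpha_i}(z_i)\rangle_{\hat g}= C\prod_{j<k}|z_j-z_k|^{-\alpha_j\alpha_k}\,\mu^{-s}\gamma^{-1}\Gamma(s)\,\E[M_{\hat g,\gamma}(F)^{-s}],\qquad s=\gamma^{-1}(\sum_i\alpha_i-2Q),
\end{equation*}
together with two facts about Gaussian multiplicative chaos. For distinct $z_i$ the prefactor $C\prod_{j<k}|z_j-z_k|^{-\alpha_j\alpha_k}\mu^{-s}\gamma^{-1}$ is a finite strictly positive constant, and $\Gamma(s)\in(0,\infty)$ exactly when $s>0$; hence $\langle \prod_{i}V_{\alpha_i}(z_i)\rangle_{\hat g}\in(0,\infty)$ is equivalent to $s>0$ (that is, $\sum_i\alpha_i>2Q$) together with $0<\E[M_{\hat g,\gamma}(F)^{-s}]<\infty$. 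To see that $\sum_i\alpha_i>2Q$ is genuinely necessary I would go back to the $c$-integral before the Gaussian expectation: its integrand equals $e^{(\sum_i\alpha_i-2Q)c}$ times a factor that tends, as $c\to-\infty$, to a strictly positive (random) constant, uniformly in the regularization since $M_{\hat g,\gamma,\epsilon}(\C)\to M_{\hat g,\gamma}(\C)<\infty$ almost surely; so for $\sum_i\alpha_i\le 2Q$ this integral diverges at $c=-\infty$ and the correlation function is $+\infty$. From here on assume $s>0$.

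For sufficiency of the conditions $\alpha_i<Q$, the key point is that then $M_{\hat g,\gamma}(F)\in(0,\infty)$ almost surely. Positivity holds because $F>0$ and the chaos measure charges every open set, and finiteness I would obtain from the modulus of continuity $M_{\hat g}(B_r)\le C(\omega)r^{\gamma Q-\delta}$ stated above (uniform over balls of radius $r$). Away from the $z_i$ the density $F(z)=\prod_i|z-z_i|^{-\gamma\alpha_i}\hat g(z)^{-\frac{\gamma}{4}\sum_l\alpha_l}$ is bounded, while near $z_i$ it is comparable to $|z-z_i|^{-\gamma\alpha_i}$; a dyadic decomposition of a small ball about $z_i$ then gives $\int_{B_\rho(z_i)}|z-z_i|^{-\gamma\alpha_i}M_{\hat g,\gamma}(dz)\le C\sum_{k\ge 0}2^{k\gamma\alpha_i}M_{\hat g}(B_{2^{-k}}(z_i))\le C'\sum_{k\ge 0}2^{-k\gamma(Q-\alpha_i)+k\delta}<\infty$ as soon as $\delta<\gamma(Q-\alpha_i)$, and summing over $i$ proves the claim. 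Consequently $M_{\hat g,\gamma}(F)^{-s}\in(0,\infty)$ a.s., so $\E[M_{\hat g,\gamma}(F)^{-s}]>0$; moreover, choosing a ball $U$ at positive distance from all $z_i$, on which $F\ge c_U>0$, one has $M_{\hat g,\gamma}(F)^{-s}\le c_U^{-s}M_{\hat g,\gamma}(U)^{-s}$, and $\E[M_{\hat g,\gamma}(U)^{-s}]<\infty$ because subcritical multiplicative chaos has negative moments of all orders (for instance from the small-deviation bound $\P(M_{\hat g,\gamma}(U)\le t)\le e^{-c(\log 1/t)^2}$, or via Kahane's convexity inequality). Hence $0<\E[M_{\hat g,\gamma}(F)^{-s}]<\infty$ and the correlation function is finite and strictly positive.

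For necessity of the conditions $\alpha_i<Q$, suppose $\alpha_{i_0}\ge Q$ for some $i_0$. Then near $z_{i_0}$ one has $F(z)\ge c\,|z-z_{i_0}|^{-\gamma Q}$, so it suffices to prove $\int_{B_\rho(z_{i_0})}|z-z_{i_0}|^{-\gamma Q}M_{\hat g,\gamma}(dz)=+\infty$ almost surely; then $M_{\hat g,\gamma}(F)=+\infty$ a.s., $M_{\hat g,\gamma}(F)^{-s}=0$ a.s., and the correlation function equals $0\notin(0,\infty)$. This is the matching \emph{lower} bound to the modulus of continuity, now at the critical exponent $\gamma Q$, and it is the step I expect to be the main obstacle. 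I would derive it from the exact scaling covariance of the chaos measure under $z\mapsto 2^{-k}z$, schematically $M_{\hat g,\gamma}(2^{-k}A)\stackrel{d}{\approx}2^{-k\gamma Q}e^{\gamma\Omega_k}\widetilde M(A)$ with $\Omega_k$ a centred Gaussian of variance $\sim k\log 2$ independent of a copy $\widetilde M$ of the chaos on $A$: applied to the dyadic annuli $A_k=B_{2^{-k}}(z_{i_0})\setminus B_{2^{-k-1}}(z_{i_0})$, it yields for the piece $I_k:=\int_{A_k}|z-z_{i_0}|^{-\gamma Q}M_{\hat g,\gamma}(dz)$ the lower bound $I_k\ge 2^{k\gamma Q}M_{\hat g,\gamma}(A_k)\stackrel{d}{\approx}e^{\gamma\Omega_k}\widetilde M(A_k)$, a random variable whose law does not concentrate near $0$, so that $\P(I_k\ge c_0)\ge c_1>0$ uniformly in $k$. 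A Borel--Cantelli argument exploiting the near-independence of the Gaussian field across well-separated scales (equivalently: $\{M_{\hat g,\gamma}(F)=+\infty\}$ is a tail event for a scale decomposition of the field, hence of probability $0$ or $1$, and this estimate rules out $0$) then forces $\sum_k I_k=+\infty$ a.s., which is what we needed. Together with the previous two paragraphs this establishes the Proposition; the only genuinely hard ingredient is the sharp two-sided control of the chaos measure near the insertion points --- its upper half being the quoted modulus of continuity and its lower half the scaling argument just sketched.
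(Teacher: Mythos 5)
Your argument is correct and is essentially the argument that the paper defers to \cite{DKRV}: the paper itself, being a survey, only records the modulus-of-continuity estimate $M_{\hat g}(B_r)\le C(\omega)r^{\gamma Q-\delta}$ and states that it yields integrability of $F$ when $\alpha_i<Q$, leaving the remaining implications to the reference. You have filled those in along the standard lines. Two comments on the step you rightly single out as the crux, namely the a.s.\ divergence of $\int_{B_\rho(z_{i_0})}|z-z_{i_0}|^{-\gamma\alpha_{i_0}}M_{\hat g,\gamma}(dz)$ for $\alpha_{i_0}\ge Q$. First, for $\alpha_{i_0}>Q$ strictly you do not need the borderline scaling argument at all: the pointwise lower modulus $M_{\hat g,\gamma}(B_r(z_{i_0}))\ge c(\omega)r^{\gamma Q+\delta}$ already forces $I_k\gtrsim 2^{k(\gamma(\alpha_{i_0}-Q)-\delta)}\to\infty$. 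Only $\alpha_{i_0}=Q$ requires the exact-scaling decomposition. Second, for that critical case your use of the $0$--$1$ law can be sidestepped: writing (as you do) $I_k\approx e^{\gamma\Omega_k}\widetilde M_k(A_0)$ with $\Omega_k$ a random walk with i.i.d.\ centred Gaussian increments and $\widetilde M_k(A_0)$ i.i.d.\ and independent of the walk, recurrence of $\Omega_k$ gives $\Omega_k\ge 0$ infinitely often a.s., and the second Borel--Cantelli lemma, applied conditionally on the walk along that random set of indices, gives $\widetilde M_k(A_0)\ge m$ for infinitely many of those $k$, whence $\sum_k I_k=\infty$ a.s.\ directly. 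The only technical debt is that the round-metric GFF is not exactly star-scale invariant, so the displayed scaling relation holds up to a uniformly bounded (in $k$) multiplicative correction; this is harmless but should be said.
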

These bounds for $\alpha_i$ are called {\it Seiberg bounds}. Note that they imply that we need at {\it at least three} vertex operators to have a finite correlation function.

\subsection{KPZ conjectures}

Given $z_1,z_2,z_3$ we define the probability measure
$$
d\hat\P_{{\mu,\gamma}}:=
\langle \prod_{i=1}^3V_{\gamma}(z_i)\rangle_{\hat g}^{-1}\prod_{i=1}^nV_{\gamma}(z_i)d\nu_{ \hat g}$$
We may now  state the KPZ conjecture precisely: the random measure $\rho_{\mu,\gamma}$ coming from scaling limit is in law equal to the measure $\caM:=e^{\gamma c}M_{ \hat g,\gamma}$ under $\hat\P_{{\mu,\gamma}}$.
Let $A=\caM(\C)$ be the "volume of the universe". By a simple change of variables in the $c$-integration $e^{\gamma c}M_{g,\gamma}=A$ we obtain
$$
 \E F(A) =\frac{\mu^s}{\Gamma (s)}\int_0^{\infty}F(y)y^se^{-\mu y}\,dy$$
where $s= ({3\gamma-2Q})/\gamma=2-4/\gamma^2$
i.e. under $\P_{\mu,\gamma}$ the law of  $A$ is  $\Gamma(2-4/\gamma^2,\mu ) $.
which agrees with the result in random surfaces. 

The emphasis of KPZ was actually on correlation functions of Conformal Field Theories on random surfaces. As an example, consider the Ising model ($\gamma=\sqrt 3$). 
We can transport the Ising spins $\sigma_v=\pm 1$ sitting at vertices $v$ of $T$ to $\hat\C$. Define the 
 distribution
 \begin{align}\label{ising}
 \Phi_T^{(\epsilon)}(z)=\epsilon^{\frac{5}{6}}\sum_{v\in{\cal V}(T)}\sigma_v\delta(z-\psi_T(v)).
 \end{align}
where $\psi_T:T\to\hat\C$  is the uniformizing map. Then under $\P_{\mu_0+\epsilon\mu,\gamma}$ this becomes a random field on $\hat\C$ and  the KPZ conjecture says that its correlation functions converge (in the sense of distributions) to a product form
 \begin{align*}
\lim_{\epsilon\to 0}\E\Phi^{(\epsilon)}(u_1)\dots \Phi^{(\epsilon)}(u_n)=\E\sigma(u_1)\dots \sigma(u_n)\E_{{\mu}, \gamma}V_\alpha(u_1)\dots V_\alpha(u_n)
\end{align*}
where $\E\sigma(u_1)\dots \sigma(u_n)$ are the correlation functions of the Ising model in the scaling limit on $\hat\C$ and  $\alpha$ is determined from the requirement $\frac{1}{16}+\Delta_{\alpha}=1$ which means that $\sigma(z)e^{\alpha\phi_g(z)}$  transforms under conformal maps as a density.

\subsection{Conformal Field Theory}
So far we have motivated the Liouville model through its conjectural relationship  to scaling limits of random triangulations. However, the Liouville model  is also an interesting {\it Conformal Field Theory} by itself. This way of looking we view the vertex operators as (Euclidean) quantum fields.

First, using the M\"obius invariance (Corollary \ref{mobius})  of $\nu_g$  and taking care with the transformation of the $\epsilon$ in the vertex operator one gets 
$$
\langle \prod_{i=1}^nV_{\alpha_i}(\psi(z_i))\rangle_g
=\prod_i|\psi'(z_i)|^{-2\Delta_{\alpha_i}}\langle \prod_{i=1}^nV_{\alpha_i}(z_i)\rangle_g
$$
where $\Delta_{\alpha}=\frac{\alpha}{2}(Q-\frac{\alpha}{2})$. In CFT parlance, $V_\alpha$ is a {\it primary field} with conformal weight $\Delta_{\alpha}$.

Second, the Liouville model has also {\it local conformal symmetry}. In CFT  this derives from the  {\it energy-momentum tensor}  which encodes the variations of the theory with respect to the background metric. More specifically, one may define the the correlation functions in a smooth Riemannian metric near our  ${g}$ and consider the one parameter family $g^{-1}_\epsilon=g^{-1}+\epsilon f \partial_z\otimes \partial_z
$ where $f$ is a smooth function with support in $\C\setminus \cup_i z_i$. 
 Then
 (a component of) the stress tensor $T(z)$ is defined by the following formula in the physics literature (see \cite{gaw})
 \begin{equation}\label{defgen}  
 \frac{_d}{^{d\epsilon}}\mid_{\epsilon=0} \langle   \prod_l V_{\alpha_l}(z_l)   \rangle_{g_\epsilon}:=     \int f(z)\langle T(z) \prod_l V_{\alpha_l}(z_l)    \rangle_g  g(z)dz.
 \end{equation}
 A simple formal computation then yields the following  heuristic formula 
 \begin{equation}\label{defforus}
 T(z)=  Q \partial_{z}^2 \phi(z)- (( \partial_{z}\phi(z))^2-\E( \partial_{z}X_g(z))^2)
 \end{equation}
 where $\phi$ is the { Liouville field}.
 In the same way, perturbing the metric instead by $\epsilon f \partial_{\bar z}\otimes \partial_{\bar z}$ yields the field $\bar T(z)$.
  
  $T(z)$ encodes { local conformal symmetries} through  the {\it Conformal Ward Identities}. The first Ward identity controls the singularity as the argument of $T$ gets close to one of the $V_\alpha$:
\begin{equation}
  \langle T(z) \prod_l V_{\alpha_l}(z_l)   \rangle_g= \sum_{k} \frac{\Delta_{\alpha_k} }{(z-z_k)^2} \langle  \prod_l V_{\alpha_l}(z_l)   \rangle_g   -\sum_{k} \frac{1}{z-z_k} \partial_{z_k}\langle  \prod_l V_{\alpha_l}(z_l)   \rangle_g  \quad\label{wardid1}
 \end {equation}
 and 
 the second identity controls the singularity when two $T$-insertions come close  \begin{align}  \nonumber
  &  \langle T(z)T(z') \prod_l V_{\alpha_l}(z_l)   \rangle_g
=\frac{\hf c_{\mathrm{L}} }{(z-z')^4}  \langle T(z')T(z) \prod_l V_{\alpha_l}(z_l)   \rangle_g\\&+\frac{2 }{(z-z')^2} \langle T(z') \prod_l V_{\alpha_l}(z_l)   \rangle_g  +\frac{1 }{z-z'} \partial_{z'} \langle T(z') \prod_l V_{\alpha_l}(z_l)   \rangle_g+\dots\label{wardid2}
 \end {align}
where the dots refer to terms that are bounded as $z\to z'$. 
In \cite{krv}  we define $T(z)$ rigorously and prove the Ward identities.

\subsection{Representation Theory} 
Let us finally reconstruct the quantum theory from our probabilistic framework.
Fix the metric $g=\hat g$, the round metric. Let $\caF_\D$ consist of functions $F(\phi)$ measurable w.r.t. the $\sigma$-algebra generated by $\phi |_\D$.
 The measure $\nu_{\hat g}$ is {\it reflection positive}: 
$$(F,G):=\int \overline{ F(X)}(\Theta G)(X)d\nu_{\hat g}(X)\geq 0\ \ \ \forall F,G\in\caF_\D
$$
where 
$(\Theta F)(X):=F(\theta X)$ and $(\theta X)(z)=X(1/\bar z)$.
Define the {Physical Hilbert space} as (here bar denotes completion)
$$\caH:=\overline{\caF_\D/\{F:(F,F)=0\}}
$$
The GFF can be decomposed  to an  independent sum:
$$
X_{\hat g}=X_\D+X_{\D^c}+P\psi
$$
where $X_\D$ and $X_{\D^c}$ are Dirichlet GFF's on $\D$ and  $\D^c$, $\psi$ is the restriction of the GFF to $\partial\D=S^1$ with zero average 
  ("1/f noise") and
$P\psi$ is the harmonic extension of $\psi$ on $\C$.
Let $\E_\D $ be the expectation in the $X_\D$. Then
$$
(UF)(c,\psi):=e^{-Qc}\E_\D (e^{-\mu \int_\D  e^{\gamma \phi}dz}F(\phi))
$$
defines a unitary map 
$$U:\caH\to L^2(\P(d\psi)\,dc)$$
and we may identify $\caH$ with the latter.
The dilation $z\to e^{-t}z$ with $t\geq 0$ acts on $\caF_\D$ and generates a contraction semigroup
$$
e^{-tH}:\caH\to\caH
$$
The generator $H
\geq 0$ is the {Hamiltonian} operator of the CFT. 

  Let $\caV$ be the linear span of the vectors 
  $
  U( \prod_{i=1}^n V_{\alpha_i}(z_i))
  $ with $|z_i|<1$. Then 
$$
L_n=\oint_{|z|=r} z^{n+1}T(z).
$$
acts on $\caV$ by taking $1-r$ small enough. 
The
Ward identities imply the  {\it Virasoro algebra} commutation rules on $\caV$:
$$
[L_m,L_n]=(m-n)L_{m+n}+\frac{_{c_L}}{^{12}}m(m^2-1)\delta_{m,-n}.
$$
The operators satisfy $L_n^\ast=L_{-n}$ on $\caV$. The conjugate field $\bar T$ gives rise to another copy of the Virasoro algebra. A major challenge is to 
study the reduction of this representation  to irreducibles. It is conjectured \cite{rib} that
$\caH $ decomposes to a direct integral $\int_{\R_+}^\oplus \caH_PdP$
where $\caH_P$ is a highest weight module for the two algebras 
with $L_0\psi_P=\Delta_{Q+iP}\psi_P$ and similarly for  $\bar L_0$. 
$\psi_P$ is  formally the  state corresponding to the vertex operator $V_{Q+iP}$ which saturates the Seiberg bound. In \cite{DKRV2} these were constructed for $P=0$. It would be nice to understand the complex case.

\subsection{DOZZ-conjecture} In conformal field theory it is believed \cite{BPZ} that all correlation functions are determined by the knowledge of primary fields (i.e. spectrum of representations) and their three point functions. For the latter  there is a remarkable conjecture due to Dorn, Otto, Zamolodchikov and  Zamolodchikov \cite{Do,ZZ} in Liouville theory. By M\"obius invariance 
$$
\langle V_{\alpha_1}(z_1)V_{\alpha_2} (z_2)V_{\alpha_3}(z_3)\rangle
= |z_1-z_2|^{2 \Delta_{12}} |z_2-z_3|^{2 \Delta_{23}} |z_1-z_3|^{2 \Delta_{13}} C_{\gamma}( \alpha_1, \alpha_2, \alpha_3)
$$
where  $\Delta_{12}= \Delta_{\alpha_3}-\Delta_{\alpha_1}-\Delta_{\alpha_2}  $ etc. and 
$$
C_{\gamma}( \alpha_1, \alpha_2, \alpha_3)=
const.\mu^{-s}\Gamma(s)\,
\E\,  Z^{-s}
$$
with 
$$
Z=\int |z|^{-\alpha_1\gamma}|z-1|^{-\alpha_2\gamma}\hat g(z)^{-\frac{\gamma}{4}\sum_{i=1}^{3}\alpha_i}M_{\hat g,\gamma}(dz)
.$$ 
The   {DOZZ Conjecture} gives an explicit formula for $C_{\gamma}( \alpha_1, \alpha_2, \alpha_3)$. It is based on analyticity and symmetry  assumptions  that lack proofs. One of the ingredients in its derivation was recently proved in \cite{krv} namely  the so-called {\it BPZ equations} \cite{BPZ})  for the vertex operators $V_{\chi}$ with $\chi=({- \frac{\gamma}{2}})^{\pm1}$
(in the language of CFT, these are level two 
degenerate fields). More precisely, 
we prove
\begin{align*}
 (\frac{_1}{^{\chi^2}}
 \partial_{z}^2   + \sum_k (\frac{\Delta_{\alpha_k}}{(z-z_k)^2}   +  
 \frac{1}{z-z_k}  \partial_{z_k}) )\langle   V_{\chi}(z) \prod_i V_{\alpha_i}(z_i)   \rangle     =  0.
\end{align*}
Using the BPZ equation, we recover an explicit  formula found earlier in the physics literature for the 4 point correlation function $\langle    V_{-\frac{\gamma}{2}}  (z)  \prod_{i=1}^3 V_{\alpha_i}(z_i)  \rangle$. Following what is called Teschner's trick \cite{teschner}, we then deduce a non trivial functional relation for $C_{\gamma}( \alpha_1, \alpha_2, \alpha_3)$. The DOZZ formula follows from this relation provided $C_{\gamma}( \alpha_1, \alpha_2, \alpha_3)$ can be extended analytically away from the region $\sum\alpha_i>2Q$ where it is defined. It is a challenge to complete this argument.

\section{References}

\renewcommand{\refname}{}    

\frenchspacing

\end{document}